\pdfoutput=1
\documentclass[oneside,english]{amsart}
\usepackage[T1]{fontenc}
\usepackage[latin9]{inputenc}
\usepackage{amsthm}
\usepackage{amstext}
\usepackage{amssymb}

\makeatletter

\newcommand{\noun}[1]{\textsc{#1}}

\numberwithin{equation}{section} 
\numberwithin{figure}{section} 
\theoremstyle{plain}
\theoremstyle{plain}
\newtheorem{thm}{Theorem}
  \theoremstyle{definition}
  \newtheorem{defn}[thm]{Definition}
 \theoremstyle{definition}
  \newtheorem{example}[thm]{Example}
  \theoremstyle{plain}
  \newtheorem{lem}[thm]{Lemma}

\usepackage[OT1]{fontenc} 

\makeatother

\usepackage{babel}

\begin{document}

\title{On Element SDD Approximability}

\author{Haim Avron, Gil Shklarski, and Sivan Toledo}

\date{20 May 2008}
\begin{abstract}
This short communication shows that in some cases scalar elliptic
finite element matrices cannot be approximated well by an \noun{sdd}
matrix. We also give a theoretical analysis of a simple heuristic
method for approximating an element by an \noun{sdd} matrix.
\end{abstract}
\maketitle

\section{Introduction}

In~\cite{ACST09} we study the theoretical and practical aspects
of approximation scalar elliptic finite element matrices by \noun{sdd}
matrices. The core task in the process is element-by-element approximation
of reasonably small element matrices. This short communication shows
that in some cases element matrices cannot be approximated well by
an \noun{sdd} matrix. We also give a theoretical analysis of a simple
heuristic method for approximating an element by an \noun{sdd} matrix.

There are several different definition of condition numbers and generalized
condition number. We give here the definition that is suitable for
analyzing preconditioners, which is the ultimate goal of our approximation.
\begin{defn}
Given two real symmetric positive semidefinite matrices $A$ and $B$
with the same null space $\mathbb{S}$, a \emph{finite generalized
eigenvalue} $\lambda$ of $(A,B)$ is a scalar satisfying $Ax=\lambda Bx$
for some $x\not\in\mathbb{S}$. The \emph{generalized finite spectrum}
$\Lambda(A,B)$ is the set of finite generalized eigenvalues of $(A,B)$,
and the \emph{generalized condition number} $\kappa(A,B)$ is \[
\kappa(A,B)=\frac{\max\Lambda(A,B)}{\min\Lambda(A,B)}\;.\]

\end{defn}
We also define the condition number $\kappa(A)=\kappa(A,P_{\perp\mathbb{S}})$
of a single matrix $A$ with null-space $\mathbb{S}$, where $P_{\perp\mathbb{S}}$
is the orthogonal projector onto the subspace orthogonal to $\mathbb{S}$.
This scalar is the ratio between the maximal eigenvalue and the minimal
nonzero eigenvalue of $A$.

\section{Approximability of Ill-Conditioned Matrices}

\subsection{Approximability vs. Ill-conditioning}

In \cite{ACST09} we show that if $A$ is well-conditioned it is always
well-approximable. The question is whether this sufficient condition
is also a necessary one. When $A$ is ill conditioned, there may or
may not be an \noun{sdd} matrix $B$ that approximates it well; the
following two examples demonstrate both cases.

Example~2.13 in~\cite{ACST09} presents an \noun{sdd} matrix that
is ill conditioned. This is an obvious example of an ill-conditioned
but well-approximable matrix. First, we show an example of a non-\noun{sdd}
(and not close to \noun{sdd}) ill-conditioned matrix which is still
well-approximable. We 
\begin{example}
(\cite{Avron10}) Let\[
A=\frac{1}{6\epsilon}\left[\begin{array}{cccccc}
3(1+\epsilon^{2}) & \epsilon^{2} & 1 & -4\epsilon^{2} & 0 & -4\\
\epsilon^{2} & 3\epsilon^{2} & 0 & -4\epsilon^{2} & 0 & 0\\
1 & 0 & 3 & 0 & 0 & -4\\
-4\epsilon^{2} & -4\epsilon^{2} & 0 & 8(1+\epsilon^{2}) & -8 & 0\\
0 & 0 & 0 & -8 & 8(1+\epsilon^{2}) & -8\epsilon^{2}\\
-4 & 0 & -4 & 0 & -8\epsilon^{2} & 8(1+\epsilon^{2})\end{array}\right]\]
for some small $\epsilon>0$. This matrix is the element matrix for
a quadratic triangular element with nodes $(0,0)$, $(0,\epsilon)$
and $(1,0)$, quadrature points are midpoints of the edges with equal
weights, and material constant $\theta=1$.
\end{example}
This matrix is clearly ill conditioned since the maximum ratio between
its diagonal elements is proportional to $1/\epsilon^{2}$. 

To show that this matrix is approximable consider the following \noun{sdd}
matrix: \[
A_{+}=\frac{1}{6\epsilon}\left[\begin{array}{cccccc}
4(1+\epsilon^{2}) & 0 & 0 & -4\epsilon^{2} & 0 & -4\\
0 & 4\epsilon^{2} & 0 & -4\epsilon^{2} & 0 & 0\\
0 & 0 & 4 & 0 & 0 & -4\\
-4\epsilon^{2} & -4\epsilon^{2} & 0 & 8(1+\epsilon^{2}) & -8 & 0\\
0 & 0 & 0 & -8 & 8(1+\epsilon^{2}) & -8\epsilon^{2}\\
-4 & 0 & -4 & 0 & -8\epsilon^{2} & 8(1+\epsilon^{2})\end{array}\right]\,.\]
 We will show that $\kappa(A,A_{+})\leq2$. Define the matrix \[
A_{-}=\frac{1}{6\epsilon}\left[\begin{array}{cccccc}
(1+\epsilon^{2}) & -\epsilon^{2} & -1 & 0 & 0 & 0\\
-\epsilon^{2} & \epsilon^{2} & 0 & 0 & 0 & 0\\
-1 & 0 & 1 & 0 & 0 & 0\\
0 & 0 & 0 & 0 & 0 & 0\\
0 & 0 & 0 & 0 & 0 & 0\\
0 & 0 & 0 & 0 & 0 & 0\end{array}\right]\,.\]
Notice that $A=A_{+}-A_{-}$. Since $A_{-}$ is symmetric positive
definite this implies that $\lambda_{\max}(A,A_{+})\leq1$. We will
show that $\lambda_{\min}(A,A_{+})\geq1/2$. This condition is equivalent
to the condition that $\lambda_{\max}(A_{+},A)\leq2$, which is the
equivalent to the condition that $2A-A_{+}$ is positive semidefinite.
According to Lemma 3.3 in \cite{BGHNT} it is enough to prove that
$\bar{\sigma}(A_{-},A_{+})\leq\frac{1}{2}$. This can easily be achieved
by path embedding where we embed the $(1,2)$ edge in $A_{-}$ with
the path$(1,4)\rightarrow(4,2)$ and the edge $(1,3)$ with the path
$(1,6)\rightarrow(6,3)$. Congestion is $1$ because no edge is reused
and for both paths the dilation is $\frac{1}{2}$.

\subsection{Pathological Inapproximability}

The question that we answer in this subsection is whether there exist
inapproximable elements. By the last section, if there exist such
an inapproximable element matrix, it should be ill conditioned. The
following example show that such a matrix indeed exist.
\begin{example}
(\cite{Shklarski08}) The following matrix is an element matrix for
an isosceles triangle with two tiny angles and one that is almost
$\pi$, with nodes at $(0,0)$, $(1,0)$, and $(1/2,\epsilon)$ for
some small $\epsilon>0$. The element matrix is \[
A=\frac{1}{2\epsilon}\begin{bmatrix}\frac{1}{4}+\epsilon^{2} & \frac{1}{4}-\epsilon^{2} & -\frac{1}{2}\\
\\\frac{1}{4}-\epsilon^{2} & \frac{1}{4}+\epsilon^{2} & -\frac{1}{2}\\
\\-\frac{1}{2} & -\frac{1}{2} & 1\end{bmatrix}\]
This matrix has rank $2$ and null vector $\begin{bmatrix}1 & 1 & 1\end{bmatrix}^{T}$.
We now show that for any \noun{sdd} matrix $B$ with the same null
space, $\kappa(A,B)\geq\epsilon^{-2}/4$. Let $v=\begin{bmatrix}1 & -1 & 0\end{bmatrix}^{T}$
and $u=\begin{bmatrix}1 & 1 & -2\end{bmatrix}^{T}$; both are orthogonal
to $\begin{bmatrix}1 & 1 & 1\end{bmatrix}^{T}$. We have $v^{T}Av=2\epsilon$
and $u^{T}Au=4.5\epsilon^{-1}$. Therefore,\begin{eqnarray*}
\kappa(A,B) & = & \max_{x\perp\text{null}(A)}\frac{x^{T}Ax}{x^{T}Bx}\times\max_{x\perp\text{null}(A)}\frac{x^{T}Bx}{x^{T}Ax}\\
 & \geq & \frac{u^{T}Au}{u^{T}Bu}\times\frac{v^{T}Bv}{v^{T}Av}\\
 & = & \frac{4.5}{2\epsilon^{2}}\times\frac{v^{T}Bv}{u^{T}Bu}\;.\end{eqnarray*}

We denote the entries of $B$ by \[
B=\begin{bmatrix}b_{12}+b_{13} & -b_{12} & -b_{13}\\
-b_{12} & b_{12}+b_{23} & -b_{23}\\
-b_{13} & -b_{23} & b_{13}+b_{23}\end{bmatrix}\]
where the $b_{ij}$'s are non-negative. Furthermore, at least two
of the $b_{ij}$'s must be positive, otherwise $B$ will have rank
$1$ or $0$, not rank $2$. In particular, $b_{13}+b_{23}>0$. This
gives\[
\frac{v^{T}Bv}{u^{T}Bu}=\frac{4b_{12}+b_{13}+b_{23}}{9b_{13}+9b_{23}}=\frac{4b_{12}}{9b_{13}+9b_{23}}+\frac{1}{9}\geq\frac{1}{9}\;.\]
Therefore, $\kappa(A,B)>\epsilon^{-2}/4$, which can be arbitrarily
large.
\end{example}

\section{\label{subsec:positive-part}A Simple Heuristic for Symmetric Diagonally-Dominant
Approximations}

The following definition presents a heuristic for \noun{sdd} approximation. 
\begin{defn}
(\cite{ACST09}) Let $A$ be a symmetric positive (semi)definite matrix.
We define $A_{+}$ to be the \noun{sdd} matrix defined by\[
\left(A_{+}\right)_{ij}=\begin{cases}
a_{ij} & i\neq j\mbox{ and }a_{ij}<0\\
0 & i\neq j\mbox{ and }a_{ij}\geq0\\
\sum_{k\neq j}-\left(A_{+}\right)_{ik} & i=j\;.\end{cases}\]
Clearly, $A_{+}$ is \noun{sdd}. We show that if $A$ is well conditioned
this simple heuristic yields a fairly good approximation.\end{defn}
\begin{lem}
(\cite{Shklarski08}) Let $A$ be an \noun{spsd }$n_{e}$-by-$n_{e}$
matrix with $\textrm{null}(A)=\textrm{span}[1\ldots1]^{T}$. Then
$\textrm{null}(A_{+})=\textrm{span}[1\ldots1]^{T}$, and $\kappa(A,A_{+})\leq\sqrt{n_{e}}\kappa(A)$.
Moreover, if there exist a constant $c$ and an index $i$ such that
$\left\Vert A\right\Vert _{\inf}\leq cA_{ii}$ , then $\kappa(A,A_{+})\leq c\kappa(A)$.\end{lem}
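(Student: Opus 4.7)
The plan is to verify $\textrm{null}(A_+)=\textrm{null}(A)$ and then factor $\kappa(A,A_+)=\lambda_{\max}(A,A_+)\cdot\lambda_{\max}(A_+,A)$, bounding each factor separately.

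For the null space, $A_+$ has zero row sums by construction, so $[1,\ldots,1]^T$ lies in its kernel. To show it exhausts the kernel, I would use the identity $x^T M x=\sum_{i<j}(-M_{ij})(x_i-x_j)^2$ valid for any symmetric $M$ with zero row sums. Applied to $A$, this says that if the subgraph $G^-$ of strictly negative off-diagonals of $A$ were disconnected along a cut $(S_1,S_2)$, then $x=\mathbf{1}_{S_1}$ would produce a nonpositive value, with contributions coming only from edges in $G^+$ that cross the cut; \noun{spsd}ness then forces every cross-cut entry of $A$ to vanish, making $A$ block-diagonal and contradicting $\dim\textrm{null}(A)=1$. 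Hence $G^-$ is connected, and applying the same identity to $A_+$ yields $\textrm{null}(A_+)=\textrm{span}[1,\ldots,1]^T$.

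Next, $A_+-A$ has zero row sums and non-positive off-diagonals (zero where $A_{ij}<0$, else $-A_{ij}\le 0$), so it is a nonnegatively-weighted graph Laplacian, hence \noun{spsd}. Thus $A_+\succeq A$ and $\lambda_{\max}(A,A_+)\le 1$. For the reverse direction, Rayleigh-quotient bounds on the common null-space complement give $\lambda_{\max}(A_+,A)\le\lambda_{\max}(A_+)/\lambda_{\min}^+(A)$. A short zero-row-sum calculation, writing $A_{ii}=n_i-p_i$ with $n_i$, $p_i$ the sums of absolute values of the negative and positive off-diagonals of row $i$, shows $\|A\|_\infty=\|A_+\|_\infty=2\max_i(A_+)_{ii}$. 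Combining with the standard inequality $\|A\|_\infty\le\sqrt{n_e}\,\|A\|_2=\sqrt{n_e}\,\lambda_{\max}(A)$ then gives $\lambda_{\max}(A_+)\le\sqrt{n_e}\,\lambda_{\max}(A)$, so $\kappa(A,A_+)\le\sqrt{n_e}\,\kappa(A)$. Under the additional hypothesis $\|A\|_\infty\le cA_{ii}$, the same chain improves to $\lambda_{\max}(A_+)\le\|A\|_\infty\le cA_{ii}\le c\,\lambda_{\max}(A)$, using $A_{ii}=e_i^T Ae_i\le\lambda_{\max}(A)$.

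The main obstacle is the null-space preservation step: it is the only place where one genuinely needs both \noun{spsd}ness of $A$ and the one-dimensional null-space hypothesis, and it is slightly subtle because disconnection of $G^-$ does not at first glance violate any structural assumption. Everything else is routine manipulation of norms plus the observation that a zero-row-sum matrix with non-positive off-diagonals is automatically \noun{spsd}.
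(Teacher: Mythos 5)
Your proof is correct, and the spectral part follows essentially the same route as the paper: you identify $A_+-A$ as a zero-row-sum matrix with nonpositive off-diagonals (hence \noun{spsd}), conclude $\lambda_{\max}(A,A_+)\le 1$, pass to $\lambda_{\max}(A_+,A)\le\lambda_{\max}(A_+)/\lambda_{\min}^{+}(A)$ by Rayleigh quotients, and use the row-sum identity $\sum_j|A_{ij}|=\sum_j|(A_+)_{ij}|=2(A_+)_{ii}$ together with $\|A_+\|_2\le\|A_+\|_\infty=\|A\|_\infty\le\sqrt{n_e}\,\|A\|_2$; the refinement under $\|A\|_\infty\le cA_{ii}$ is also identical.

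The one place you genuinely diverge is the null-space claim, where you take a longer road than necessary. You prove $\textrm{null}(A_+)=\textrm{span}[1\ldots1]^T$ by showing that the graph $G^-$ of strictly negative off-diagonals must be connected (otherwise a cut indicator vector makes $x^TAx\le 0$, forcing $A$ to be block diagonal and contradicting $\dim\textrm{null}(A)=1$), and then invoking the standard fact that a connected weighted Laplacian has a one-dimensional kernel. This is valid, but the paper extracts the same conclusion directly from the inequality $0\le x^TAx\le x^TA_+x$ that you have already established: if $x^TA_+x=0$ then $x^TAx=0$, and \noun{spsd}ness gives $Ax=0$, so $\textrm{null}(A_+)\subseteq\textrm{null}(A)=\textrm{span}[1\ldots1]^T$; the reverse inclusion is just $A_+[1\ldots1]^T=0$. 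Your connectivity argument buys a structural fact (connectedness of $G^-$) that is not needed here, at the cost of an extra case analysis; the paper's argument is a two-line consequence of the ordering $A\preceq A_+$ you derive anyway. Neither affects correctness, but you may want to note that the null-space preservation is not the ``only genuinely subtle step'' you describe --- it is in fact free once $A_+\succeq A\succeq 0$ is in hand.
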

\begin{proof}
We first show that $\textrm{null}(A)=\textrm{null}(A_{+})$. Let $A_{-}=A_{+}-A$.
The matrix $A_{-}$ is symmetric and contains only nonpositive off
diagonals, and \begin{eqnarray*}
A_{-}\begin{bmatrix}1 & \ldots & 1\end{bmatrix}^{T} & = & A_{+}\begin{bmatrix}1 & \ldots & 1\end{bmatrix}^{T}-A\begin{bmatrix}1 & \ldots & 1\end{bmatrix}^{T}=0\;.\end{eqnarray*}
Therefore, $A_{-}$ is an \noun{sdd} matrix. Since \noun{sdd} matrices
are also \noun{spsd}, for all $x$,

\begin{equation}
0\leq x^{T}Ax=x^{T}A_{+}x-x^{T}A_{-}x\leq x^{T}A_{+}x\;.\label{eq:xAx_is_less_xAplusx}\end{equation}
Therefore, $\mbox{null}(A_{+})\subseteq\textrm{null}(A)$. The equality
of these linear spaces follows from the equation $A_{+}\begin{bmatrix}1 & \ldots & 1\end{bmatrix}^{T}=0$. 

By equation~\ref{eq:xAx_is_less_xAplusx}, for all $x\notin\mbox{null}(A_{+})$,
$x^{T}Ax/x^{T}A_{+}x\leq1$. This shows that $\max\Lambda(A,A_{+})\leq1$. 

We now bound $\max\Lambda(A_{+},A)$ from above. For every vector
$x\notin\mbox{null}(A)$, \[
\frac{x^{T}A_{+}x}{x^{T}Ax}=\frac{x^{T}A_{+}x/x^{T}x}{x^{T}Ax/x^{T}x}\leq\frac{\max\Lambda(A_{+})}{\min\Lambda(A)}\;.\]
Therefore, it is is sufficient to show that $\max\Lambda(A_{+})\leq\theta\max\Lambda(A)$
for some positive $\theta$ in order to show that $\max\Lambda(A_{+},A)\leq\theta\kappa(A)$
and $\kappa(A,A_{+})\leq\theta\kappa(A)$. 

Since $A\begin{bmatrix}1 & \ldots & 1\end{bmatrix}^{T}=0$, $A$ is
\noun{spsd}, and assuming $n_{e}>1$, for every $i$,\[
\left|A_{ii}\right|=A_{ii}=\sum_{j\neq i}\left|\left(A_{+}\right)_{ij}\right|-\sum_{j\neq i}\left|\left(A_{-}\right)_{ij}\right|\;.\]
Therefore, since $A_{+}\begin{bmatrix}1 & \ldots & 1\end{bmatrix}^{T}=0$,
for every $i$,

\begin{eqnarray}
\sum_{j}\left|A_{ij}\right| & = & \left|A_{ii}\right|+\sum_{j\neq i}\left|\left(A_{+}\right)_{ij}\right|+\sum_{j\neq i}\left|\left(A_{-}\right)_{ij}\right|\nonumber \\
 & = & 2\sum_{j\neq i}\left|\left(A_{+}\right)_{ij}\right|\nonumber \\
 & = & \left(A_{+}\right)_{ii}+\sum_{j\neq i}\left|\left(A_{+}\right)_{ij}\right|\nonumber \\
 & = & \sum_{j}\left|\left(A_{+}\right)_{ij}\right|\;.\label{eq:row sums A, A_{+}}\end{eqnarray}
By the definitions of the $1$-norm and the $\inf$-norm, and the
fact that $A_{+}$ is symmetric, $\left\Vert A_{+}\right\Vert _{1}=\left\Vert A_{+}\right\Vert _{\inf}$.
Moreover, by \cite[Corollary 2.3.2]{GolubVa89}, $\left\Vert A_{+}\right\Vert _{2}\leq\sqrt{\left\Vert A_{+}\right\Vert _{1}\left\Vert A_{+}\right\Vert _{\inf}}$.
Therefore,

\begin{eqnarray}
\left\Vert A_{+}\right\Vert _{2} & \leq & \left\Vert A_{+}\right\Vert _{\inf}\nonumber \\
 & = & \max_{i}\sum_{j}\left|\left(A_{+}\right)_{ij}\right|\nonumber \\
 & = & \max_{i}\sum_{j}\left|A_{ij}\right|\label{eq:|A+|2<|A|inf}\\
 & = & \left\Vert A\right\Vert _{\inf}\;,\nonumber \end{eqnarray}
where the second equality is due to equation~\ref{eq:row sums A, A_{+}}.
Therefore, by~\cite[Equation 2.3.11]{GolubVa89} \[
\left\Vert A_{+}\right\Vert _{2}\leq\left\Vert A\right\Vert _{\inf}\leq\sqrt{n_{e}}\left\Vert A\right\Vert _{2}\;.\]
Since $A_{+}$ and $A$ are both symmetric, $\max\Lambda(A_{+})=\left\Vert A_{+}\right\Vert _{2}$
and $\max\Lambda(A)=\left\Vert A\right\Vert _{2}$. Therefore, $\max\Lambda(A_{+})\leq\sqrt{n_{e}}\max\Lambda(A)$.
This shows that $\kappa(A,A_{+})\leq\sqrt{n_{e}}\kappa(A)$ and concludes
the proof of the first part of the lemma.

We now assume that there exist a constant $c$ and an index $i$,
such that $\left\Vert A\right\Vert _{\inf}\leq cA_{ii}$. By equation~\ref{eq:|A+|2<|A|inf},
we have that $\left\Vert A_{+}\right\Vert _{2}\leq cA_{ii}$. Since
for every $i$, $A_{ii}\leq\left\Vert A\right\Vert _{2}$, we have
that $\left\Vert A_{+}\right\Vert _{2}\leq c\left\Vert A\right\Vert _{2}$.
Therefore, $\max\Lambda(A_{+})\leq c\max\Lambda(A)$. This shows that
in this case $\kappa(A,A_{+})\leq c\kappa(A)$ and concludes the proof
of the lemma.

\end{proof}
The following example shows that if $A$ is not well-conditioned,
this heuristic may generate a bad approximation.
\begin{example}
(\cite{Avron10}) Let $0<\epsilon\ll1$, and let $M\geq\frac{4}{\epsilon}$,
\[
A=\begin{bmatrix}1+M & -1 & 0 & -M\\
-1 & 1+M & -M & 0\\
0 & -M & M & 0\\
-M & 0 & 0 & M\end{bmatrix}-\begin{bmatrix}0 & 0 & 0 & 0\\
0 & 0 & 0 & 0\\
0 & 0 & 1-\epsilon & -1+\epsilon\\
0 & 0 & -1+\epsilon & 1-\epsilon\end{bmatrix}\;.\]
This matrix is symmetric semidefinite with rank $3$ and null vector
$\begin{bmatrix}1 & 1 & 1 & 1\end{bmatrix}^{T}$. We show that for
small $\epsilon$, $A$ is ill conditioned, with condition number
larger than $8\epsilon^{-2}$. Let\[
q_{1}=\frac{1}{2}\begin{bmatrix}1\\
1\\
1\\
1\end{bmatrix}\;,\quad q_{2}=\frac{1}{2}\begin{bmatrix}1\\
1\\
-1\\
-1\end{bmatrix}\;,\quad q_{3}=\frac{1}{2}\begin{bmatrix}1\\
-1\\
1\\
-1\end{bmatrix}\;,\;\mbox{and}\quad q_{4}=\frac{1}{2}\begin{bmatrix}1\\
-1\\
-1\\
1\end{bmatrix}\]
be an orthonormal basis for $\mathbb{R}^{4}$. We have\begin{eqnarray*}
q_{1}^{T}Aq_{1} & = & 0\\
q_{2}^{T}Aq_{2} & = & 2M\\
q_{3}^{T}Aq_{3} & = & 2M+\epsilon\\
q_{4}^{T}Aq_{4} & = & \epsilon\;.\end{eqnarray*}
Therefore, $\kappa(A)\geq2M/\epsilon\geq8\epsilon^{-2}$. We show
that the matrix $A_{+}$ is a poor approximation of $A$.\begin{eqnarray*}
q_{1}^{T}A_{+}q_{1} & = & 0\\
q_{2}^{T}A_{+}q_{2} & = & 2M\\
q_{3}^{T}A_{+}q_{3} & = & 2M+1\\
q_{4}^{T}A_{+}q_{4} & = & 1\;.\end{eqnarray*}
Therefore, \[
\kappa(A,A_{+})>\left(1-\frac{1-\epsilon}{2M+1}\right)\epsilon^{-1}\approx\epsilon^{-1}\;.\]
On the other hand, the \noun{sdd} matrix\[
B=\begin{bmatrix}\epsilon+M & -\epsilon & 0 & -M\\
-\epsilon & \epsilon+M & -M & 0\\
0 & -M & \epsilon+M & -\epsilon\\
-M & 0 & -\epsilon & \epsilon+M\end{bmatrix}\]
is a good approximation of $A$, with $\kappa(A,B)<9$. This bound
follows from a simple path-embedding arguments~\cite{BGHNT}, which
shows that $3A-B$ and $3B-A$ are positive semidefinite. The quantitative
parts of these arguments rest on the inequalities\[
\frac{1}{2M}+\frac{1}{2M}+\frac{1}{3-\epsilon}\leq\frac{1}{3-4\epsilon}\]
and\[
\frac{1}{2M}+\frac{1}{2M}+\frac{1}{1+2\epsilon}<\frac{1}{1-3\epsilon}\;,\]
which hold for small $\epsilon$.

\bibliographystyle{plain}
\bibliography{element_approximability}

\end{example}

\end{document}